\newtheorem{theorem}{Theorem}
\theoremstyle{plain}
\numberwithin{equation}{section}
\begin{document}
\title[The Pell equations $x^{2}-\left( a^{2}+2a\right) y^{2}=N$]{Solutions
of the Pell equation $x^{2}-\left( a^{2}+2a\right) y^{2}=N$ via generalized
Fibonacci and Lucas numbers}
\author{Bilge PEKER}
\address{Department of Mathematics Education, Ahmet Kelesoglu Education
Faculty, Necmettin Erbakan University, Konya, Turkey.}
\email{bilge.peker@yahoo.com}
\date{March 6, 2013}
\subjclass[2000]{ 11D09, 11D79, 11D45, 11A55, 11B39, 11B50, 11B99 }
\keywords{Diophantine equations, Pell equations, continued fraction, integer
solutions, generalized Fibonacci and Lucas sequences }

\begin{abstract}
In this study, we find continued fraction expansion of $\sqrt{d}$ when $%
d=a^{2}+2a$ where $a$ is positive integer. We consider the integer solutions
of the Pell equation $x^{2}-\left( a^{2}+2a\right) y^{2}=N$ when $N\in
\left\{ \pm 1,\pm 4\right\} $. We formulate the $n$-th solution $\left(
x_{n},y_{n}\right) $ by using the continued fraction expansion. We also
formulate the $n$-th solution $\left( x_{n},y_{n}\right) $ via the
generalized Fibonacci and Lucas sequences.
\end{abstract}

\maketitle

\bigskip \textbf{1. Introduction and Preliminaries}

\bigskip The equation $x^{2}-dy^{2}=N,$ with given integers $d$ and $N,$
unknowns $x$ and $y$, is called as Pell equation. In the literature, there
are several methods for finding the integer solutions of Pell equation such
as the Lagrange-Matthews-Mollin algorithm, the cyclic method, Lagrange's
system of reductions, use of binary quadratic forms, etc.

If $d$ is negative, the equation can have only a finite number of solutions.
If $d$ is a perfect square, i.e. $d=a^{2},$ the equation reduces to $\left(
x-ay\right) \left( x+ay\right) =N$ and there is only a finite number of
solutions. If $d$ is a positive integer but not a perfect square, then
simple continued fractions are very useful. The simple continued fraction
expansion of $\sqrt{d}\ $has the form $\sqrt{d}=\left[ a_{0},\overline{%
a_{1},a_{2},a_{3},...,a_{m-1},2a_{0}}\right] $ with $a_{0}=\left[ \sqrt{d}%
\right] $. If the fundamental solution of $x^{2}-dy^{2}=1$ is $x=x_{1}$ and $%
y=y_{1}$, then all nontrivial solutions are given by $x=x_{n}$ and $y=y_{n}$%
, where $x_{n}+y_{n}\sqrt{d}=\left( x_{1}+y_{1}\sqrt{d}\right) ^{n}$. If a
single solution $\left( x,y\right) =\left( g,h\right) $ of the equation $%
x^{2}-dy^{2}=N$ is known, other solutions can be found. Let $\left(
r,s\right) $ be a solution of the unit form $x^{2}-dy^{2}=1$. Then $\left(
x,y\right) =\left( gr\pm dhs,gs\pm hr\right) $ are solutions of the equation 
$x^{2}-dy^{2}=N$.

Given a continued fraction expansion of $\sqrt{d}$, where all the $a_{i}$'s
are real and all except possibly $a_{0}$ are positive, define sequences $%
\left\{ p_{n}\right\} $ and $\left\{ q_{n}\right\} $ by $p_{-2}=0$, $%
p_{-1}=1 $, $p_{k}=a_{k}p_{k-1}+p_{k-2}$ and $q_{-2}=1$, $q_{-1}=0$, $%
q_{k}=a_{k}q_{k-1}+q_{k-2}$ for $k\geq 0$. Let $m$ be the length of the
period of continued fraction. Then the fundamental solution of $%
x^{2}-dy^{2}=1$ is%
\begin{equation*}
\left( x_{1},y_{1}\right) =\left\{ 
\begin{array}{c}
\left( p_{m-1},q_{m-1}\right) \\ 
\left( p_{2m-1},q_{2m-1}\right)%
\end{array}%
\right. 
\begin{array}{c}
\text{if }m\text{ is even} \\ 
\text{if }m\text{ is odd.}%
\end{array}%
\end{equation*}

\bigskip\ If the length of the period of continued fraction is even, then
the equation $x^{2}-dy^{2}=-1$ has no integer solutions. If $m$ is odd, the
fundamental solution of $x^{2}-dy^{2}=-1$ is given by $\left(
x_{1},y_{1}\right) =\left( p_{m-1},q_{m-1}\right) $ $\left[ 4\right] $.

Now let us give the following known theorems $\left[ 5\right] $ that will be
needed for the next section.

\begin{theorem}
Let $d\equiv 2\left( \func{mod}4\right) $ or $d\equiv 3\left( \func{mod}%
4\right) $. Then the equation $x^{2}-dy^{2}=-4$ has positive integer
solutions if and only if the equation $x^{2}-dy^{2}=-1$ has positive integer
solutions.
\end{theorem}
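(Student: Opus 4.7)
The plan is to prove both implications directly, with the forward direction being essentially trivial and the reverse direction relying on a short parity/congruence argument modulo $4$.

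For the easy direction, suppose $(x_0,y_0)$ is a positive integer solution of $x^2-dy^2=-1$. Then I would simply check that $(2x_0,2y_0)$ solves the $N=-4$ equation, since
\begin{equation*}
(2x_0)^{2}-d(2y_0)^{2}=4\bigl(x_0^{2}-dy_0^{2}\bigr)=-4.
\end{equation*}
No hypothesis on $d\pmod 4$ is needed here.

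For the reverse direction, suppose $(x_1,y_1)$ is a positive integer solution of $x^2-dy^2=-4$. The key observation is that if I can show both $x_1$ and $y_1$ must be even, then writing $x_1=2x_0$, $y_1=2y_0$ and dividing by $4$ gives $x_0^{2}-dy_0^{2}=-1$, which finishes the proof. To show the evenness, I would reduce the equation $x_1^{2}=dy_1^{2}-4$ modulo $4$ and split into the two congruence classes allowed by the hypothesis. If $d\equiv 2\pmod 4$ and $y_1$ were odd, then $dy_1^{2}\equiv 2\pmod 4$ and so $x_1^{2}\equiv 2\pmod 4$, which is impossible since squares are $\equiv 0$ or $1\pmod 4$. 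Likewise, if $d\equiv 3\pmod 4$ and $y_1$ were odd, then $dy_1^{2}\equiv 3\pmod 4$ and $x_1^{2}\equiv 3\pmod 4$, again impossible. Hence in both cases $y_1$ is even, and then $x_1^{2}=dy_1^{2}-4\equiv 0\pmod 4$ forces $x_1$ to be even as well.

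There is no real obstacle here; the only thing that requires care is verifying that the hypothesis $d\equiv 2$ or $3\pmod 4$ is exactly what rules out the parity $y_1$ odd (note that for $d\equiv 1\pmod 4$ one could have $y_1$ odd, which is why the theorem excludes that case, and for $d\equiv 0\pmod 4$ the equation $x^2-dy^2=-1$ has no solutions for elementary reasons). I would close the proof by combining the two halves: the map $(x_0,y_0)\mapsto (2x_0,2y_0)$ bijects positive solutions of $x^2-dy^2=-1$ with positive solutions of $x^2-dy^2=-4$ under the stated hypothesis on $d$.
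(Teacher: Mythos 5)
Your proof is correct, but note that the paper itself offers no proof to compare against: Theorem 1 is stated as a known result imported from Robertson's notes (reference $\left[5\right]$), so you have in effect supplied the missing argument. Your argument is the standard elementary one and it checks out in both directions: the map $\left(x_{0},y_{0}\right)\mapsto\left(2x_{0},2y_{0}\right)$ handles the easy implication for arbitrary $d$, and for the converse your reduction of $x_{1}^{2}=dy_{1}^{2}-4$ modulo $4$ correctly shows that $y_{1}$ odd would force $x_{1}^{2}\equiv 2\pmod{4}$ (when $d\equiv 2$) or $x_{1}^{2}\equiv 3\pmod{4}$ (when $d\equiv 3$), both impossible, after which $x_{1}^{2}\equiv 0\pmod{4}$ forces $x_{1}$ even and division by $4$ yields a solution of $x^{2}-dy^{2}=-1$. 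Your side remark is also accurate and clarifies why the hypothesis is sharp: for $d\equiv 1\pmod{4}$ the parity argument genuinely fails (e.g.\ $d=5$ admits the odd solution $\left(1,1\right)$ of $x^{2}-5y^{2}=-4$, so solutions of $x^{2}-dy^{2}=-4$ need not arise by doubling), while for $d\equiv 0\pmod{4}$ the equation $x^{2}-dy^{2}=-1$ is ruled out mod $4$ outright. One small caveat: your closing claim that the doubling map is a bijection between the two positive solution sets is true under the stated hypothesis, but it is stronger than what the theorem asserts and is not needed; the existence equivalence already follows from the two implications you established.
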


\begin{theorem}
Let $d\equiv 0\left( \func{mod}4\right) $. If fundamental solution of the
equation $x^{2}-\left( d/4\right) y^{2}=1$ is $x_{1}+y_{1}\sqrt{d/4}$, then
fundamental solution of the equation $x^{2}-dy^{2}=4$ is $\left(
2x_{1},y_{1}\right) $.
\end{theorem}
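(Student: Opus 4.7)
The plan is to prove the result in two stages: first verify that $(2x_{1},y_{1})$ is actually a solution of $x^{2}-dy^{2}=4$, and then show it is the minimal positive solution.

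For the first stage, starting from $x_{1}^{2}-(d/4)y_{1}^{2}=1$ I would simply multiply both sides by $4$ to obtain $(2x_{1})^{2}-dy_{1}^{2}=4$, which immediately shows that $(2x_{1},y_{1})$ is a positive integer solution of the target equation.

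For the second stage, suppose $(X,Y)$ is any positive integer solution of $X^{2}-dY^{2}=4$. Writing $d=4D$ with $D=d/4$ (this is where the hypothesis $d\equiv 0\pmod 4$ enters), the equation becomes $X^{2}=4+4DY^{2}$, so $X$ is even. Setting $X=2X'$, dividing by $4$ yields $X'^{2}-DY^{2}=1$, hence $(X',Y)$ is a positive solution of $x^{2}-(d/4)y^{2}=1$. Since $(x_{1},y_{1})$ is the fundamental solution of that equation, we have $x_{1}\le X'$ and $y_{1}\le Y$, and therefore $2x_{1}\le X$ and $y_{1}\le Y$. This shows $(2x_{1},y_{1})$ is the smallest positive solution, i.e.\ the fundamental solution of $x^{2}-dy^{2}=4$.

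The argument is essentially a bijection between positive solutions of $x^{2}-dy^{2}=4$ and positive solutions of $x^{2}-(d/4)y^{2}=1$ given by $(X,Y)\mapsto (X/2,Y)$, and the only thing to check is that this map is well-defined, which requires $X$ to be even. I expect no real obstacle here; the only subtle point is justifying that $X$ must be even, but this is immediate from $X^{2}\equiv 0\pmod 4$, which uses the hypothesis $d\equiv 0\pmod 4$ in an essential way. (If $d$ were not divisible by $4$, the reduction step would fail, explaining why a separate theorem is needed for the case $d\equiv 2,3\pmod 4$.)
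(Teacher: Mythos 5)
Your proof is correct, but there is nothing in the paper to compare it against: this statement is Theorem 2 of the preliminaries, which the paper imports without proof from Robertson's notes on the generalized Pell equation (reference [5]). Taken on its own terms, your two-stage argument is a complete and standard elementary proof: multiplying $x_{1}^{2}-(d/4)y_{1}^{2}=1$ by $4$ gives the verification, and the parity reduction $X=2X'$ gives the converse direction. The one step you assert without justification is that the fundamental solution $(x_{1},y_{1})$ of $x^{2}-(d/4)y^{2}=1$ is \emph{coordinatewise} minimal, i.e.\ $x_{1}\le X'$ and $y_{1}\le Y$ simultaneously; fundamentality is usually defined as minimality of $x+y\sqrt{d/4}$, so this deserves a sentence. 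It follows because for positive solutions of the unit equation the two coordinates increase together: if $(u,v)$ and $(u',v')$ both satisfy $u^{2}-(d/4)v^{2}=1$, then $u<u'$ if and only if $v<v'$, since $(d/4)v^{2}=u^{2}-1$. With that observation your map $(X,Y)\mapsto(X/2,Y)$ is an order-preserving bijection between positive solutions of $x^{2}-dy^{2}=4$ and of $x^{2}-(d/4)y^{2}=1$, and the conclusion follows. Your closing remark correctly locates where the hypothesis $d\equiv 0\pmod 4$ enters; indeed for $d\equiv 1\pmod 4$ the equation $x^{2}-dy^{2}=4$ can have odd solutions (e.g.\ $d=5$ with $(x,y)=(3,1)$), so the evenness of $X$, and hence the whole reduction, genuinely fails without it.
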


\begin{theorem}
Let $d\equiv 1\left( \func{mod}4\right) $ or $d\equiv 2\left( \func{mod}%
4\right) $ or $d\equiv 3\left( \func{mod}4\right) $. If fundamental solution
of the equation $x^{2}-dy^{2}=1$ is $x_{1}+y_{1}\sqrt{d}$, then fundamental
solution of the equation $x^{2}-dy^{2}=4$ is $\left( 2x_{1},2y_{1}\right) $.
\end{theorem}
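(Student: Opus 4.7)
The plan is first to check that $(2x_{1},2y_{1})$ actually solves $x^{2}-dy^{2}=4$, and then to argue it is the smallest positive solution. The first part is automatic: multiplying the relation $x_{1}^{2}-dy_{1}^{2}=1$ by $4$ gives $(2x_{1})^{2}-d(2y_{1})^{2}=4$, so $(2x_{1},2y_{1})$ is a positive integer solution.

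The content of the theorem lies in the minimality. I would proceed by contradiction: assume there is a positive integer solution $(u,v)$ of $x^{2}-dy^{2}=4$ with $u<2x_{1}$ (equivalently $v<2y_{1}$). The crucial step is to show that both $u$ and $v$ must be even; once this is done, writing $u=2U$ and $v=2V$ yields $U^{2}-dV^{2}=1$ with $U<x_{1}$, contradicting the hypothesis that $(x_{1},y_{1})$ is the fundamental solution of the unit equation.

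To force $u,v$ both even I would split on the residue of $d$ modulo $4$. If $d\equiv 2\pmod{4}$, reducing mod $2$ forces $u$ even; substituting $u=2u^{\prime}$ and using that $d/2$ is odd then forces $v$ even. If $d\equiv 3\pmod{4}$, the congruence $u^{2}+v^{2}\equiv 0\pmod{4}$, combined with the fact that squares are $0$ or $1$ modulo $4$, leaves only the option that both $u$ and $v$ are even. If $d\equiv 1\pmod{4}$, the mod-$4$ reduction gives only $u\equiv v\pmod{2}$; one must then refine to mod $8$, using that odd squares are $\equiv 1\pmod{8}$, in order to rule out the both-odd case.

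The main obstacle is precisely this last case $d\equiv 1\pmod{4}$: the mod-$4$ analysis alone does not preclude $u,v$ both odd, and one has to look modulo $8$ (and potentially track the residue class of $d$ modulo $8$) to complete the descent. This is the step I would write out most carefully, since the other two cases collapse to routine parity checks, while here the possibility of ``odd-odd'' half-integer style solutions is the subtle point that blocks a one-line proof.
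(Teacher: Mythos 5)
Your framework (verify the solution, then descend by forcing $u$ and $v$ to be even) is the standard one, and two and a half of your three cases are fine: $d\equiv 2\pmod{4}$ and $d\equiv 3\pmod{4}$ work exactly as you say, and for $d\equiv 1\pmod{8}$ the mod-$8$ refinement you anticipate does close the odd--odd case, since $u,v$ both odd would give $u^{2}-dv^{2}\equiv 1-d\equiv 0\pmod{8}$, incompatible with $4$. But the step you flagged as the delicate one is a genuine, irreparable gap: for $d\equiv 5\pmod{8}$ one has $1-d\equiv 4\pmod{8}$, so the odd--odd case is consistent with the congruence and no amount of residue tracking can exclude it --- and it actually occurs. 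Take $d=5$: $(u,v)=(3,1)$ satisfies $3^{2}-5\cdot 1^{2}=4$, while the fundamental solution of $x^{2}-5y^{2}=1$ is $(x_{1},y_{1})=(9,4)$, so the fundamental solution of $x^{2}-5y^{2}=4$ is $(3,1)$, not $(2x_{1},2y_{1})=(18,8)$. Similarly $d=13$ gives $(11,3)$ and $d=21$ gives $(5,1)$. So no proof of the statement as printed can exist: the theorem is false for $d\equiv 5\pmod{8}$, precisely because of the ``odd--odd half-integer style solutions'' you identified (they correspond to the fundamental unit of the order of discriminant $d$ being half-integral, as with $\left( 1+\sqrt{5}\right) /2$). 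The correct conclusion of your analysis is a counterexample, not a harder mod-$8$ argument.

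For the comparison you were asked to make: the paper offers no proof at all --- this theorem is quoted as known background from its reference $\left[ 5\right] $, and the clause $d\equiv 1\pmod{4}$ appears to be an over-broad transcription of the source; the tenable hypothesis in that residue class is $d\equiv 1\pmod{8}$, or else an added assumption that $x^{2}-dy^{2}=4$ has no solution in odd integers. The defect is harmless for the paper's own purposes, since the theorem is applied only to $d=a^{2}+2a$, which is $\equiv 3\pmod{4}$ when $a$ is odd and $\equiv 0\pmod{4}$ when $a$ is even (where Theorem 2, not Theorem 3, is the relevant statement). Your instinct about exactly where the difficulty lives was correct; had you pushed the $d\equiv 5\pmod{8}$ computation through instead of deferring it, you would have found that it refutes the theorem rather than completing its proof.
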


The next two theorems can be found in $\left[ 2\right] $ and $\left[ 5\right]
.$

\begin{theorem}
Let $x_{1}+y_{1}\sqrt{d}$ be the fundamental solution of the equation $%
x^{2}-dy^{2}=4$. Then all positive integer solutions of the equation $%
x^{2}-dy^{2}=4$ are given by%
\begin{equation*}
x_{n}+y_{n}\sqrt{d}=\left( x_{1}+y_{1}\sqrt{d}\right) ^{n}/2^{n-1}
\end{equation*}%
with $n\geq 1$.
\end{theorem}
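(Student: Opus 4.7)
The approach I would take is to translate the equation $x^{2}-dy^{2}=4$ into a norm condition in the quadratic field $\mathbb{Q}(\sqrt{d})$. Setting $\alpha=(x_{1}+y_{1}\sqrt{d})/2$, the identity $x_{1}^{2}-dy_{1}^{2}=4$ is equivalent to $\alpha\bar{\alpha}=1$, so every power $\alpha^{n}$ also has norm $1$. The heart of the proof is to show that $\alpha^{n}$ has the form $(x_{n}+y_{n}\sqrt{d})/2$ with positive integers $x_{n},y_{n}$; once this is established, taking norms yields $x_{n}^{2}-dy_{n}^{2}=4$ automatically, and multiplying through by $2^{n}$ gives the claimed formula $(x_{1}+y_{1}\sqrt{d})^{n}=2^{n-1}(x_{n}+y_{n}\sqrt{d})$.

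To establish the integrality claim, I would induct on $n$, using the recursion $2x_{n+1}=x_{1}x_{n}+dy_{1}y_{n}$ and $2y_{n+1}=x_{1}y_{n}+y_{1}x_{n}$ obtained from expanding $\alpha\cdot\alpha^{n}$. The point is that both right-hand sides must be even. This is forced by the parity relation inherited from $x_{k}^{2}-dy_{k}^{2}=4$: reducing modulo $2$ shows $x_{k}\equiv dy_{k}\pmod{2}$ for every $k$, which in turn makes the two combinations above even. This parity bookkeeping is the step I expect to be the most delicate part of the argument, since it is what distinguishes the scaled equation $x^{2}-dy^{2}=4$ from the unit form $x^{2}-dy^{2}=1$ and prevents the naive expression $(x_{1}+y_{1}\sqrt{d})^{n}$ from by itself producing an integer pair.

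Having produced the family $(x_{n},y_{n})$, I would then verify that every positive integer solution of $x^{2}-dy^{2}=4$ arises in this way via a descent argument. If $(u,v)$ is any positive solution, set $\beta=(u+v\sqrt{d})/2$, a norm-one element of $\mathbb{Q}(\sqrt{d})$ with $\beta\geq 1$. By the definition of the fundamental solution, $\alpha$ is the smallest such element strictly greater than $1$, so there is a unique integer $n\geq 0$ with $\alpha^{n}\leq\beta<\alpha^{n+1}$. The element $\gamma=\beta\alpha^{-n}$ then has norm $1$ and satisfies $1\leq\gamma<\alpha$; writing $\gamma=(u'+v'\sqrt{d})/2$ and invoking the minimality of $\alpha$ forces $\gamma=1$, hence $\beta=\alpha^{n}$ and $(u,v)=(x_{n},y_{n})$. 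Combining the existence and exhaustion parts proves the theorem, the only non-formal ingredient being the parity check highlighted above.
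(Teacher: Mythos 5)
You should first note an oddity of the assignment: the paper does not prove this statement at all. It is Theorem 4, listed among the preliminaries with the remark that ``the next two theorems can be found in $[2]$ and $[5]$'' (LeVeque and Robertson), and it is then simply invoked in the proof of Theorem 10. So there is no internal proof to compare against; your proposal supplies exactly the foundational piece the paper imports from the references, and it is essentially the standard argument found there.

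On the merits, your proof is correct, and the parity step you flag as the delicate point does go through cleanly: reducing $x_{k}^{2}-dy_{k}^{2}=4$ modulo $2$ gives $x_{k}\equiv dy_{k}\pmod{2}$, whence $x_{1}x_{n}+dy_{1}y_{n}\equiv d(d+1)y_{1}y_{n}\equiv 0\pmod{2}$ and $x_{1}y_{n}+y_{1}x_{n}\equiv 2dy_{1}y_{n}\equiv 0\pmod{2}$, since $d(d+1)$ is always even --- notably this works uniformly in $d$ with no case split on its parity. The one point you gloss is in the descent: you write $\gamma=\beta\alpha^{-n}=(u'+v'\sqrt{d})/2$ and invoke minimality of $\alpha$, but this presupposes that $u',v'$ are integers with $u'>0$ and $v'\geq 0$. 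Both facts follow from tools you already set up: since $\alpha^{-1}=\bar{\alpha}=(x_{1}-y_{1}\sqrt{d})/2$, repeated multiplication by $\bar{\alpha}$ preserves the form $(\text{integer}+\text{integer}\cdot\sqrt{d})/2$ by the same parity computation (the sign change is invisible modulo $2$), and $1\leq\gamma<\alpha$ together with $\gamma\bar{\gamma}=1$ gives $u'=\gamma+\bar{\gamma}\geq 2>0$ and $v'\sqrt{d}=\gamma-\bar{\gamma}\geq 0$; then $v'>0$ would produce a positive solution smaller than the fundamental one, so $v'=0$, $u'=2$, $\gamma=1$ as you claim. With that sentence made explicit your proof is complete, and it is arguably more self-contained than the paper, which in its own Theorems 7 and 10 takes this closed formula as given and only extracts Binet-type expressions from it.
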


\begin{theorem}
Let $x_{1}+y_{1}\sqrt{d}$ be the fundamental solution of the equation $%
x^{2}-dy^{2}=-4$. Then all positive integer solutions of the equation $%
x^{2}-dy^{2}=-4$ are given by%
\begin{equation*}
x_{n}+y_{n}\sqrt{d}=\left( x_{1}+y_{1}\sqrt{d}\right) ^{2n-1}/2^{2n-2}
\end{equation*}%
with $n\geq 1$.
\end{theorem}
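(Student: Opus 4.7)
The plan is to mirror the proof of Theorem~4, adjusting for the norm $\alpha\bar\alpha = -4$ rather than $+4$. Write $\alpha = x_1 + y_1\sqrt{d}$ and $\bar\alpha = x_1 - y_1\sqrt{d}$. First, since $\alpha\bar\alpha = -4$, the identity
\[
\frac{\alpha^{2n-1}}{2^{2n-2}}\cdot\frac{\bar\alpha^{2n-1}}{2^{2n-2}} = \frac{(-4)^{2n-1}}{4^{2n-2}} = -4
\]
shows that the pair $(x_n, y_n)$ defined by $x_n + y_n\sqrt{d} = \alpha^{2n-1}/2^{2n-2}$ satisfies $x_n^2 - dy_n^2 = -4$ as soon as $x_n, y_n$ are integers.

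Integrality will be handled by introducing the auxiliary element $\beta := \alpha^2/2$. Using $x_1^2 = dy_1^2 - 4$, one gets $\alpha^2 = 2\beta$ with $\beta = (dy_1^2-2) + x_1 y_1\sqrt{d} \in \mathbb{Z}[\sqrt{d}]$ and $\beta\bar\beta = 4$. After verifying that $\beta$ is the fundamental solution of $X^2 - dY^2 = 4$, Theorem~4 yields $\beta^{n-1}/2^{n-2} \in \mathbb{Z}[\sqrt{d}]$ for $n \geq 2$ (the case $n=1$ being trivial). Since $\alpha^{2n-1}/2^{2n-2} = \alpha\cdot(\beta^{n-1}/2^{n-2})/2$, only a single factor of $2$ remains to be absorbed. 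A short parity check shows that whenever $\eta = A + B\sqrt{d}$ satisfies $A^2 - dB^2 = 4$, the product $\alpha\eta = (x_1A + dy_1B) + (x_1B + y_1A)\sqrt{d}$ has both coordinates even; the cases $d$ odd and $d$ even are treated separately, using $x_1 \equiv y_1 \pmod{2}$ and $A \equiv B \pmod{2}$ in the odd case and $x_1, A$ both even in the even case.

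For completeness, given any positive integer solution $\delta$ of $x^2 - dy^2 = -4$, set $\eta := -\delta\bar\alpha/2$. Since $dy_1^2 = x_1^2 + 4$ forces $y_1\sqrt{d} > x_1 > 0$, we have $\bar\alpha < 0$, hence $\eta > 0$; a similar parity argument shows $\eta \in \mathbb{Z}[\sqrt{d}]$; and $\eta\bar\eta = (-4)(-4)/4 = 4$. By Theorem~4 (extended to include the trivial solution $\eta = 2$ at $k=0$), we may write $\eta = \beta^k/2^{k-1}$ for some $k \geq 0$. Solving back gives $\delta = \alpha\eta/2 = \alpha\beta^k/2^k = \alpha^{2k+1}/2^{2k}$, which is precisely the claimed solution with $n = k+1$.

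The main technical obstacle is the identification of $\beta = \alpha^2/2$ as the \emph{fundamental} positive solution of $X^2 - dY^2 = 4$. The plan is by contradiction: if $\gamma = P + Q\sqrt{d}$ were a strictly smaller positive solution with $2 < \gamma < \beta$, then $2\alpha/\gamma = \alpha\bar\gamma/2$ would lie in $\mathbb{Z}[\sqrt{d}]$ by the parity lemma, would be positive (since $\bar\gamma = 4/\gamma > 0$), and would satisfy $x^2 - dy^2 = -4$, yet $2\alpha/\gamma < \alpha$ (as $\gamma > 2$) would contradict the fundamentality of $\alpha$. Beyond this, the remaining nuisance is the repetitive but elementary parity bookkeeping across the residue classes of $d$ modulo $4$.
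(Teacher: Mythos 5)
First, a point of reference: the paper does not prove this statement at all. Theorem 5 is one of the ``next two theorems [that] can be found in $[2]$ and $[5]$,'' i.e.\ it is quoted from LeVeque and Robertson without proof, so there is no internal argument to compare yours against. On its own merits, your proposal follows the standard route (reduce the $N=-4$ equation to the $N=+4$ equation via $\beta=\alpha^{2}/2$ and invoke Theorem 4), and most of the skeleton checks out: the norm computation $(-4)^{2n-1}/4^{2n-2}=-4$, the identity $\alpha^{2n-1}/2^{2n-2}=\alpha\bigl(\beta^{n-1}/2^{n-2}\bigr)/2$, the parity lemma (mod $2$ the relations $x_{1}^{2}-dy_{1}^{2}=-4$ and $A^{2}-dB^{2}=4$ give $x_{1}\equiv y_{1}$ and $A\equiv B$ when $d$ is odd, and force $x_{1}$, $A$ even when $d$ is even, whence both coordinates of the product are even), and the completeness computation $\delta=\alpha\beta^{k}/2^{k}=\alpha^{2k+1}/2^{2k}$ are all correct.

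Two steps, however, are genuinely incomplete as written. First, in your fundamentality argument for $\beta$, the clause ``$2\alpha/\gamma<\alpha$ (as $\gamma>2$) would contradict the fundamentality of $\alpha$'' only works when $\gamma<\alpha$: an element $\theta>0$ of norm $-4$ has positive $x$-coordinate if and only if $\theta>2$ (since $x=(\theta-4/\theta)/2$), and $2\alpha/\gamma>2$ if and only if $\gamma<\alpha$. Your hypothesis allows $\gamma\in(\alpha,\beta)$, in which case $2\alpha/\gamma<2$ has negative $x$-coordinate, is not a positive solution, and yields no contradiction directly; the repair is to pass to $-\overline{\left(2\alpha/\gamma\right)}=2\gamma/\alpha$, which has positive coordinates and lies strictly below $\alpha$ precisely because $\gamma<\beta=\alpha^{2}/2$, and contradict fundamentality with that element (the case $\gamma=\alpha$ being impossible since the norms differ). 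Second, in the completeness direction you feed $\eta=2\delta/\alpha$ into Theorem 4 without verifying that $\eta$ is a solution in \emph{nonnegative} integers: for a norm-$4$ element $\eta>0$ the $y$-coordinate is nonnegative if and only if $\eta\geq 2$, i.e.\ if and only if $\delta\geq\alpha$. This is exactly the minimality of the fundamental solution $\alpha$, so it is a one-line fix, but it must be stated --- it is also what justifies your extension to $k=0$ (attained exactly when $\delta=\alpha$). With these two patches your argument is a complete and correct proof of the quoted theorem.
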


In this study $\left[ 3\right] $, since generalized Fibonacci and Lucas
sequences related solutions of the forthcoming Pell equation are going to be
taken into consideration, let us briefly recall the generalized Fibonacci
sequences $\left\{ U_{n}\left( k,s\right) \right\} $ and Lucas sequences $%
\left\{ V_{n}\left( k,s\right) \right\} $. Let $k$ and $s$ be two non-zero
integers with $k^{2}+4s>0$. Generalized Fibonacci sequence is defined by%
\begin{equation*}
U_{0}\left( k,s\right) =0,U_{1}\left( k,s\right) =1
\end{equation*}%
and%
\begin{equation*}
U_{n+1}\left( k,s\right) =kU_{n}\left( k,s\right) +sU_{n-1}\left( k,s\right) 
\end{equation*}%
for $n\geq 1$. Generalized Lucas sequence is defined by%
\begin{equation*}
V_{0}\left( k,s\right) =2,V_{1}\left( k,s\right) =k
\end{equation*}%
and%
\begin{equation*}
V_{n+1}\left( k,s\right) =kV_{n}\left( k,s\right) +sV_{n-1}\left( k,s\right) 
\end{equation*}%
for $n\geq 1$. It is also well-known from the literature that generalized
Fibonacci and Lucas numbers have many interesting and significant
properties. Binet's formulas are probably the most important one among them.
For generalized Fibonacci and Lucas sequences, Binet's formulas are given by 
$U_{n}\left( k,s\right) =\frac{\alpha ^{n}-\beta ^{n}}{\alpha -\beta }$ and $%
V_{n}\left( k,s\right) =\alpha ^{n}+\beta ^{n}$ where $\alpha =\left( k+%
\sqrt{k^{2}+4s}\right) /2$ and $\beta =\left( k-\sqrt{k^{2}+4s}\right) /2$ $%
\left[ 7\right] $.

There are a large number of studies concerning Pell equation in the
literature. G\"{u}ney $\left[ 1\right] $ solved the Pell equations $%
x^{2}-\left( a^{2}b^{2}+2b\right) y^{2}=N$ when $N\in \left\{ \pm 1,\pm
4\right\} $.

In this study, we consider the integer solutions of the Pell equation 
\begin{equation*}
x^{2}-\left( a^{2}+2a\right) y^{2}=N
\end{equation*}%
in terms of the generalized Fibonacci and Lucas numbers.

\bigskip

\textbf{2. Main Results}

We consider the integer solutions of the Pell equation%
\begin{equation}
E:x^{2}-\left( a^{2}+2a\right) y^{2}=1.  \tag{1}
\end{equation}

\begin{theorem}
Let $E$ be the Pell equation in $\left( 1\right) $. Then the followings hold:

$\left( \mathbf{i}\right) $ The continued fraction expansion of $\sqrt{%
a^{2}+2a}$ is 
\begin{equation*}
\sqrt{a^{2}+2a}=\left[ a;\overline{1,2a}\right] \text{.}
\end{equation*}

$\left( \mathbf{ii}\right) $ The fundamental solution is%
\begin{equation*}
\left( x_{1},y_{1}\right) =\left( a+1,1\right)
\end{equation*}

$\left( \mathbf{iii}\right) $ The n-th solution $\left( x_{n},y_{n}\right) $
can be find by%
\begin{equation*}
\frac{x_{n}}{y_{n}}=\left[ a;\left( 1,2a\right) _{n-1},1\right]
\end{equation*}%
where $\left( 1,2a\right) _{n-1}$ means that there are $n-1$ successive
terms $\left( 1,2a\right) $.\qquad \qquad \qquad \qquad \qquad

\begin{proof}
$\left( \mathbf{i}\right) $ 
\begin{equation*}
\begin{array}{c}
\sqrt{a^{2}+2a}=a+\left( \sqrt{a^{2}+2a}-a\right) =a+\frac{1}{\frac{\sqrt{%
a^{2}+2a}+a}{2a}} \\ 
\text{ \ \ \ \ \ \ \ \ \ }=a+\frac{1}{1+\frac{\sqrt{a^{2}+2a}-a}{2a}}=a+%
\frac{1}{1+\frac{1}{\sqrt{a^{2}+2a}+a}} \\ 
=a+\frac{1}{1+\frac{1}{2a+\left( \sqrt{a^{2}+2a}-a\right) }}.\text{ \ \ \ \
\ }%
\end{array}%
\end{equation*}

Therefore $\sqrt{a^{2}+2a}=\left[ a;\overline{1,2a}\right] $. This completes
the proof.

$\left( \mathbf{ii}\right) $ The period length of the continued fraction
expansion of $\sqrt{a^{2}+2a}$ is 2. Therefore, the fundamental solution of
the equation $x^{2}-\left( a^{2}+2a\right) y^{2}=1$ is $p_{1}+q_{1}\sqrt{%
a^{2}+2a}$. It is easily seen that $p_{1}=a_{1}p_{0}+p_{-1}=a+1$ and $q_{1}=$
$a_{1}q_{0}+q_{-1}=1$. That is, the fundamental solution of $x^{2}-\left(
a^{2}+2a\right) y^{2}=1$ is $\left( x_{1},y_{1}\right) =\left( a+1,1\right) $%
.

$\left( \mathbf{iii}\right) $ For $n=1$, we get $\frac{x_{1}}{y_{1}}=\left[
a;1\right] =a+\frac{1}{1}=\frac{a+1}{1}$. Hence it is true for $n=1$.

We assume that $\left( x_{n},y_{n}\right) $ is a solution of $x^{2}-\left(
a^{2}+2a\right) y^{2}=1.$ That is, $\frac{x_{n}}{y_{n}}=[a;(1,2a)_{n-1},1]$%
\text{.}

Now we must show that it holds for $\left( x_{n+1},y_{n+1}\right) $.%
\begin{equation*}
\begin{array}{c}
\frac{x_{n+1}}{y_{n+1}}=a+\frac{1}{1+\frac{1}{2a+\frac{1}{1+\frac{1}{2a+%
\frac{1}{...2a+1}}}}}\text{ \ \ \ \ } \\ 
\text{\ \ \ \ \ \ \ } \\ 
\text{ \ \ \ }=a+\frac{1}{1+\frac{1}{a+a+\frac{1}{1+\frac{1}{2a+\frac{1}{%
...2a+1}}}}} \\ 
\\ 
=a+\frac{1}{1+\frac{1}{a+\frac{x_{n}}{y_{n}}}}\text{ \ \ \ \ \ \ \ \ } \\ 
\text{\ \ } \\ 
\text{ }=\frac{\left( a+1\right) x_{n}+\left( a^{2}+2a\right) y_{n}}{%
x_{n}+\left( a+1\right) y_{n}}.\text{ \ }%
\end{array}%
\end{equation*}

$\left( x_{n+1},y_{n+1}\right) $ is a solution of $x^{2}-\left(
a^{2}+2a\right) y^{2}=1$ since $x_{n+1}^{2}-\left( a^{2}+2a\right)
y_{n+1}^{2}=\left( \left( a+1\right) x_{n}+\left( a^{2}+2a\right)
y_{n}\right) ^{2}-\left( a^{2}+2a\right) \left( x_{n}+\left( a+1\right)
y_{n}\right) ^{2}=x_{n}^{2}-\left( a^{2}+2a\right) y_{n}^{2}=1.$
\end{proof}
\end{theorem}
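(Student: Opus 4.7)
For parts (i) and (ii) the standard continued fraction algorithm does essentially all of the work. Because $a^{2}\le a^{2}+2a<(a+1)^{2}$ we have $a_{0}=a$, so I would write $\sqrt{a^{2}+2a}=a+(\sqrt{a^{2}+2a}-a)$ and rationalize the fractional part to obtain the next complete quotient $(\sqrt{a^{2}+2a}+a)/(2a)$, whose integer part is $1$. Peeling off this $1$ and rationalizing once more yields $\sqrt{a^{2}+2a}+a$, whose integer part is $2a$, after which the leftover $\sqrt{a^{2}+2a}-a$ recurs. This gives $[a;\overline{1,2a}]$ with period $m=2$, proving (i). For (ii), the formula recalled in the introduction gives the fundamental solution as $(p_{m-1},q_{m-1})=(p_{1},q_{1})$ since $m$ is even; a direct computation of the convergent recurrence with $a_{0}=a$, $a_{1}=1$ and the initial values $p_{-1}=1$, $q_{-1}=0$ yields $p_{1}=a+1$ and $q_{1}=1$.

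For part (iii) my plan is induction on $n$. The base case $n=1$ reduces to $[a;1]=(a+1)/1$, matching (ii). For the inductive step I would evaluate $[a;(1,2a)_{n},1]$ by using the identity $2a=a+a$ to split the leading $2a$ of the $n$-th inner block; this regroups the tail of the expression into exactly $[a;(1,2a)_{n-1},1]=x_{n}/y_{n}$ by the induction hypothesis. The continued fraction then collapses to $a+\frac{1}{1+\frac{1}{a+x_{n}/y_{n}}}$, and a routine simplification produces the ratio $((a+1)x_{n}+(a^{2}+2a)y_{n})/(x_{n}+(a+1)y_{n})$. Setting $x_{n+1}$ and $y_{n+1}$ equal to the numerator and denominator of this expression, I would verify the Pell equation directly by expanding $x_{n+1}^{2}-(a^{2}+2a)y_{n+1}^{2}$ and using $(a+1)^{2}-(a^{2}+2a)=1$ to collapse it to $x_{n}^{2}-(a^{2}+2a)y_{n}^{2}$, which equals $1$ by hypothesis.

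The main obstacle I anticipate is the continued-fraction bookkeeping in the inductive step: correctly recognizing that the split $2a=a+a$ is what reattaches the tail of the expansion to $x_{n}/y_{n}$, and being careful about which level of the nested fraction the induction hypothesis is being applied at. The algebraic verification of the Pell identity is then mechanical once the right recurrence $x_{n+1}=(a+1)x_{n}+(a^{2}+2a)y_{n}$, $y_{n+1}=x_{n}+(a+1)y_{n}$ has been isolated, and one may notice in passing that this is precisely the recurrence obtained by multiplying $x_{n}+y_{n}\sqrt{a^{2}+2a}$ by the fundamental unit $x_{1}+y_{1}\sqrt{a^{2}+2a}=(a+1)+\sqrt{a^{2}+2a}$, which confirms that the continued-fraction convergents indexed as $[a;(1,2a)_{n-1},1]$ really do enumerate all positive solutions.
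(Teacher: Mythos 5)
Your proposal is correct and follows essentially the same route as the paper: the same rationalization steps yielding $\left[ a;\overline{1,2a}\right]$, the same convergent recurrence giving $\left( p_{1},q_{1}\right) =\left( a+1,1\right)$, and the same induction for (iii) using the split $2a=a+a$ to collapse the tail into $x_{n}/y_{n}$ and verifying the Pell identity via $\left( a+1\right) ^{2}-\left( a^{2}+2a\right) =1$. Your closing observation that the recurrence $x_{n+1}=\left( a+1\right) x_{n}+\left( a^{2}+2a\right) y_{n}$, $y_{n+1}=x_{n}+\left( a+1\right) y_{n}$ is multiplication by the fundamental unit is a nice touch the paper leaves implicit, but it does not change the argument.
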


\begin{theorem}
All positive integer solutions of the equation $x^{2}-\left( a^{2}+2a\right)
y^{2}=1$ are given by%
\begin{equation*}
\left( x_{n},y_{n}\right) =\left( \left( V_{n}\left( 2a+2,-1\right) \right)
/2,U_{n}\left( 2a+2,-1\right) \right)
\end{equation*}%
with $n\geq 1$.
\end{theorem}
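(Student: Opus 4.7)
The plan is to use Binet's formulas for the generalized Fibonacci and Lucas sequences and match them against the standard power formula for Pell solutions. Knowing from Theorem 6 that the fundamental solution is $(x_1,y_1)=(a+1,1)$, the standard theory cited in the introduction tells us that every positive integer solution of $(1)$ satisfies
\begin{equation*}
x_n+y_n\sqrt{a^2+2a}=\left((a+1)+\sqrt{a^2+2a}\right)^n,\quad n\geq 1,
\end{equation*}
and taking the conjugate gives $x_n-y_n\sqrt{a^2+2a}=\left((a+1)-\sqrt{a^2+2a}\right)^n$.

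Next I would specialize the Binet formulas to $(k,s)=(2a+2,-1)$. With this choice,
\begin{equation*}
\alpha=\frac{(2a+2)+\sqrt{(2a+2)^2-4}}{2}=(a+1)+\sqrt{a^2+2a},
\end{equation*}
and similarly $\beta=(a+1)-\sqrt{a^2+2a}$. In particular $\alpha-\beta=2\sqrt{a^2+2a}$, and the two quantities $\alpha,\beta$ coincide exactly with $x_1\pm y_1\sqrt{a^2+2a}$.

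The conclusion then follows by adding and subtracting the two power identities. Using Binet,
\begin{equation*}
V_n(2a+2,-1)=\alpha^n+\beta^n=(x_n+y_n\sqrt{a^2+2a})+(x_n-y_n\sqrt{a^2+2a})=2x_n,
\end{equation*}
so $x_n=V_n(2a+2,-1)/2$, and
\begin{equation*}
U_n(2a+2,-1)=\frac{\alpha^n-\beta^n}{\alpha-\beta}=\frac{2y_n\sqrt{a^2+2a}}{2\sqrt{a^2+2a}}=y_n,
\end{equation*}
giving the claimed formula.

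There is essentially no serious obstacle here; the only thing one must check carefully is the discriminant computation $k^2+4s=(2a+2)^2-4=4(a^2+2a)$, which is precisely what makes the roots $\alpha,\beta$ line up with the fundamental Pell solution. Once that identification is made, the rest is a direct substitution into Binet's formulas, so the entire argument reduces to verifying that identification and then separating the conjugate pair of power identities into its rational and irrational parts.
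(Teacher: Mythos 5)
Your proposal is correct and follows essentially the same route as the paper's own proof: both identify $\alpha=(a+1)+\sqrt{a^{2}+2a}$ and $\beta=(a+1)-\sqrt{a^{2}+2a}$ as the Binet roots for $(k,s)=(2a+2,-1)$, write $x_{n}\pm y_{n}\sqrt{a^{2}+2a}=\alpha^{n},\beta^{n}$ from the power formula for the fundamental solution, and then add and subtract to read off $x_{n}=V_{n}(2a+2,-1)/2$ and $y_{n}=U_{n}(2a+2,-1)$. Your explicit check of the discriminant identity $(2a+2)^{2}-4=4\left(a^{2}+2a\right)$ is a point the paper leaves implicit, but the argument is the same.
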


\begin{proof}
By Theorem 6-ii, all positive integer solutions of the equation $%
x^{2}-\left( a^{2}+2a\right) y^{2}=1$ are given by%
\begin{equation*}
x_{n}+y_{n}\sqrt{a^{2}+2a}=\left( a+1+\sqrt{a^{2}+2a}\right) ^{n}
\end{equation*}%
with $n\geq 1$. Assume that $\alpha =a+1+\sqrt{a^{2}+2a}$ and $\beta =a+1-%
\sqrt{a^{2}+2a}$. Then $\alpha -\beta =2\sqrt{a^{2}+2a}$.%
\begin{equation*}
x_{n}+y_{n}\sqrt{a^{2}+2a}=\alpha ^{n}
\end{equation*}%
and%
\begin{equation*}
x_{n}-y_{n}\sqrt{a^{2}+2a}=\beta ^{n}.
\end{equation*}

Therefore $x_{n}=\frac{\alpha ^{n}+\beta ^{n}}{2}=\frac{V_{n}\left(
2a+2,-1\right) }{2}$ and $y_{n}=\frac{\alpha ^{n}-\beta ^{n}}{2\sqrt{a^{2}+2a%
}}=\frac{\alpha ^{n}-\beta ^{n}}{\alpha -\beta }=U_{n}\left( 2a+2,-1\right)
. $ That is, $\left( x_{n},y_{n}\right) =\left( \frac{V_{n}\left(
2a+2,-1\right) }{2},U_{n}\left( 2a+2,-1\right) \right) .$
\end{proof}

\begin{theorem}
The Pell equation $x^{2}-\left( a^{2}+2a\right) y^{2}=-1$ has no positive
integer solutions.
\end{theorem}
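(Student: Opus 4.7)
The plan is to apply the classical criterion stated in the preliminaries: for a non-square positive integer $d$, the negative Pell equation $x^{2}-dy^{2}=-1$ admits integer solutions only when the period length $m$ of the continued fraction expansion of $\sqrt{d}$ is odd; equivalently, if $m$ is even there are no solutions. From this point of view, the present theorem is essentially a corollary of Theorem 6(i), since the computation of the period was the real work.

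Concretely, I would proceed in three quick steps. First, recall the expansion $\sqrt{a^{2}+2a}=[a;\overline{1,2a}]$ established in Theorem 6(i). Second, observe that the repeating block $(1,2a)$ has length $m=2$, which is even. Third, invoke the criterion quoted in the introduction (attributed to reference $[4]$) to conclude that $x^{2}-(a^{2}+2a)y^{2}=-1$ has no positive integer solutions.

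There is no real obstacle here, since every ingredient has already been assembled. If one preferred a self-contained verification that does not lean on the continued fraction criterion, a direct congruence argument modulo $4$ works just as well: when $a$ is even, $a^{2}+2a\equiv 0\pmod{4}$, forcing $x^{2}\equiv -1\pmod{4}$, which is impossible; when $a$ is odd, $a^{2}+2a\equiv 3\pmod{4}$, so the equation rewrites as $x^{2}+y^{2}\equiv 3\pmod{4}$, again impossible since squares are $0$ or $1$ modulo $4$. Either route yields the claim, but quoting the continued fraction criterion is the most natural closing move given the framework of the paper.
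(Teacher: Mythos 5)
Your main argument is exactly the paper's own proof: the continued fraction expansion $\sqrt{a^{2}+2a}=[a;\overline{1,2a}]$ has even period $m=2$, so the criterion from $[4]$ quoted in the preliminaries rules out solutions of $x^{2}-(a^{2}+2a)y^{2}=-1$. Your alternative congruence argument modulo $4$ is also correct and more self-contained, but as your primary route coincides with the paper's, there is nothing further to flag.
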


\begin{proof}
The length of the period of continued fraction $\sqrt{a^{2}+2a}$ is $2$,
that is even, then the equation $x^{2}-\left( a^{2}+2a\right) y^{2}=-1$ has
no positive integer solutions.
\end{proof}

\begin{theorem}
The fundamental solution of the Pell equation $x^{2}-\left( a^{2}+2a\right)
y^{2}=4$ is%
\begin{equation*}
\left( x_{1},y_{1}\right) =\left( 2a+2,2\right) \text{.}
\end{equation*}
\end{theorem}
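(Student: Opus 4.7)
The plan is to verify $(2a+2, 2)$ is a solution by direct substitution, and then establish fundamentality by ruling out any solution with $y = 1$.

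First, substituting $x = 2a+2$ and $y = 2$ into $x^2 - (a^2+2a)y^2$ gives
\[
(2a+2)^2 - 4(a^2+2a) = 4(a^2+2a+1) - 4(a^2+2a) = 4,
\]
so $(2a+2, 2)$ is indeed a positive integer solution.

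To see it is fundamental (i.e.\ has the smallest positive ordinate), I would show no solution has $y = 1$. Setting $y = 1$ forces $x^2 = a^2 + 2a + 4$, but for $a \geq 1$,
\[
(a+1)^2 \;=\; a^2+2a+1 \;<\; a^2+2a+4 \;<\; a^2+4a+4 \;=\; (a+2)^2,
\]
so $a^2+2a+4$ lies strictly between two consecutive squares and cannot itself be a square. Hence $y = 2$ is the smallest admissible ordinate, and then $x^2 = 4 + 4(a^2+2a) = 4(a+1)^2$ forces $x = 2a+2$.

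Alternatively, one can lean on Theorems 2 and 3 with a case split on the parity of $a$. If $a$ is odd then $d = a^2+2a \equiv 3 \pmod{4}$, and Theorem 3, applied to the fundamental solution $(a+1, 1)$ of the $N=1$ equation from Theorem 6(ii), directly yields $(2(a+1), 2\cdot 1) = (2a+2, 2)$. If $a = 2k$ is even then $d \equiv 0 \pmod{4}$ with $d/4 = k(k+1)$, so Theorem 2 requires first identifying the fundamental solution of $x^2 - k(k+1)y^2 = 1$; ruling out $y = 1$ by the same squeeze $k^2 < k^2+k+1 < (k+1)^2$ shows it is $(2k+1, 2)$, and then Theorem 2 produces $(2(2k+1), 2) = (2a+2, 2)$. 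I expect the direct method to be cleaner; there is essentially no obstacle, since everything reduces to the elementary squeeze of $a^2+2a+4$ between $(a+1)^2$ and $(a+2)^2$.
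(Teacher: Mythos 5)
Your proof is correct, but it takes a genuinely different route from the paper's, and in fact a more careful one. The paper's entire proof is one sentence: ``It is obvious from Theorem 3 and Theorem 6-ii,'' i.e.\ it doubles the fundamental solution $(a+1,1)$ of $x^{2}-(a^{2}+2a)y^{2}=1$ via Theorem 3. Your primary argument is instead self-contained: direct substitution shows $(2a+2,2)$ solves the equation, and the squeeze $(a+1)^{2}<a^{2}+2a+4<(a+2)^{2}$ (valid for $a\geq 1$) rules out any solution with $y=1$, so $y=2$ is the least positive ordinate and $x^{2}=4(a+1)^{2}$ forces $x=2a+2$. This needs no machinery beyond the definition of fundamental solution. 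More importantly, your alternative route exposes a real defect in the paper's citation: Theorem 3 hypothesizes $d\equiv 1,2,3\pmod{4}$, but when $a$ is even, $d=a^{2}+2a\equiv 0\pmod{4}$, so Theorem 3 simply does not apply and the paper's ``obvious'' proof covers only odd $a$. Your case split handles even $a=2k$ correctly through Theorem 2 with $d/4=k^{2}+k$, identifying $(2k+1,2)$ as the fundamental solution of $x^{2}-(k^{2}+k)y^{2}=1$ by the same squeeze $k^{2}<k^{2}+k+1<(k+1)^{2}$ (this is consistent with the expansion $\sqrt{k^{2}+k}=\left[k;\overline{2,2k}\right]$ that the paper itself computes later, in the proof of Theorem 11), and Theorem 2 then yields $(2(2k+1),2)=(2a+2,2)$. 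In short: the paper's approach buys brevity at the cost of an unaddressed congruence case, while your elementary squeeze buys a complete and uniform proof valid for all $a\geq 1$.
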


\begin{proof}
It is obvious from Theorem 3 and Theorem 6-ii.
\end{proof}

\begin{theorem}
All positive integer solutions of the equation $x^{2}-\left( a^{2}+2a\right)
y^{2}=4$ are given by%
\begin{equation*}
\left( x_{n},y_{n}\right) =\left( V_{n}\left( 2a+2,-1\right) ,2U_{n}\left(
2a+2,-1\right) \right)
\end{equation*}%
with $n\geq 1$.
\end{theorem}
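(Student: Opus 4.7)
The plan is to mirror the Binet-type argument used in the proof of Theorem 8, but now powered by Theorem 4 (the $N=4$ solution formula) together with Theorem 10, which identifies the fundamental solution of $x^2-(a^2+2a)y^2=4$ as $(2a+2,2)$. So first I would invoke Theorem 4 to write every positive integer solution as
\begin{equation*}
x_n+y_n\sqrt{a^2+2a}=\frac{(2a+2+2\sqrt{a^2+2a})^n}{2^{n-1}}.
\end{equation*}
The factor of $2$ pulls out of the numerator cleanly, giving $x_n+y_n\sqrt{a^2+2a}=2(a+1+\sqrt{a^2+2a})^n$, and this is the key simplification that makes the generalized Fibonacci/Lucas interpretation fall out automatically.

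Next I would set $\alpha=a+1+\sqrt{a^2+2a}$ and $\beta=a+1-\sqrt{a^2+2a}$, exactly as in the proof of Theorem 8. The relevant observations are $\alpha+\beta=2a+2$ and $\alpha\beta=(a+1)^2-(a^2+2a)=1$, so $\alpha,\beta$ are precisely the roots associated to the sequences $U_n(2a+2,-1)$ and $V_n(2a+2,-1)$ via Binet's formulas (with $k=2a+2$, $s=-1$, $k^2+4s=4(a^2+2a)>0$). Conjugating the displayed identity gives $x_n-y_n\sqrt{a^2+2a}=2\beta^n$, and adding/subtracting yields
\begin{equation*}
x_n=\alpha^n+\beta^n=V_n(2a+2,-1),\qquad y_n=\frac{2(\alpha^n-\beta^n)}{\alpha-\beta}=2U_n(2a+2,-1),
\end{equation*}
which is the claimed formula.

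There is no real obstacle here: the work has already been done in Theorems 4, 8, and 10. The only point that requires mild attention is confirming that $2^{n-1}$ divides $(2a+2+2\sqrt{a^2+2a})^n=2^n(a+1+\sqrt{a^2+2a})^n$ so that the quotient in Theorem 4 truly lives in $\mathbb{Z}[\sqrt{a^2+2a}]$ and equals $2\alpha^n$; this is immediate from factoring out $2^n$, so no integrality concerns arise. Everything else is a direct transcription of the Binet argument from Theorem 8 with the multiplicative constant $2$ carried along.
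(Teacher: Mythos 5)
Your proof is correct and takes essentially the same route as the paper's: invoke Theorem 4 with the fundamental solution $\left(2a+2,2\right)$, rewrite $\left(2a+2+2\sqrt{a^{2}+2a}\right)^{n}/2^{n-1}$ as $2\left(a+1+\sqrt{a^{2}+2a}\right)^{n}$, conjugate, and read off $x_{n}=V_{n}\left(2a+2,-1\right)$ and $y_{n}=2U_{n}\left(2a+2,-1\right)$ via Binet's formulas, exactly as the paper does (its $\alpha=\left(2a+2+2\sqrt{a^{2}+2a}\right)/2$ is your $a+1+\sqrt{a^{2}+2a}$). The only slips are off-by-one theorem references---in the paper the fundamental solution is Theorem 9 and the $N=1$ Binet argument is Theorem 7---which do not affect the mathematics.
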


\begin{proof}
Using Theorem 4 and Theorem 9, all positive integer solutions of the
equation $x^{2}-\left( a^{2}+2a\right) y^{2}=4$ are given by%
\begin{equation*}
x_{n}+y_{n}\sqrt{a^{2}+2a}=\left( 2a+2+2\sqrt{a^{2}+2a}\right)
^{n}/2^{n-1}=2\left( \left( 2a+2+2\sqrt{a^{2}+2a}\right) /2\right) ^{n}
\end{equation*}%
with $n\geq 1$. Assume that $\alpha =\left( 2a+2+2\sqrt{a^{2}+2a}\right) /2$
and $\beta =\left( 2a+2-2\sqrt{a^{2}+2a}\right) /2$. Then $\alpha -\beta =2%
\sqrt{a^{2}+2a}$.%
\begin{equation*}
x_{n}+y_{n}\sqrt{a^{2}+2a}=2\alpha ^{n}
\end{equation*}%
and%
\begin{equation*}
x_{n}-y_{n}\sqrt{a^{2}+2a}=2\beta ^{n}.
\end{equation*}

Therefore $x_{n}=\alpha ^{n}+\beta ^{n}=V_{n}\left( 2a+2,-1\right) $ and $%
y_{n}=\frac{\alpha ^{n}-\beta ^{n}}{\sqrt{a^{2}+2a}}=2\frac{\alpha
^{n}-\beta ^{n}}{\alpha -\beta }=2U_{n}\left( 2a+2,-1\right) .$ That is, $%
\left( x_{n},y_{n}\right) =\left( V_{n}\left( 2a+2,-1\right) ,2U_{n}\left(
2a+2,-1\right) \right) .$
\end{proof}

\begin{theorem}
Let $a>2$. The Pell equation $x^{2}-\left( a^{2}+2a\right) y^{2}=-4$ has no
positive integer solutions.
\end{theorem}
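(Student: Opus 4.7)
The plan is to split the argument into two cases according to the parity of $a$.

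First, if $a$ is odd (so $a \geq 3$), then $a^2 \equiv 1 \pmod{4}$ and $2a \equiv 2 \pmod{4}$, so $d = a^2 + 2a \equiv 3 \pmod{4}$. Theorem 1 is then directly applicable and reduces the question to whether $x^2 - (a^2+2a)y^2 = -1$ has positive integer solutions. But Theorem 8 has already shown that this equation has none. So the odd case collapses to a one-line application of the two preceding theorems.

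The substantive case is $a$ even, say $a = 2b$ with $b \geq 2$. Here $d = 4b(b+1)$ is divisible by $4$, so Theorem 1 no longer applies. The natural move is to descend: a solution $(x,y)$ of $x^2 - 4b(b+1)y^2 = -4$ forces $x$ to be even (reduce mod $4$), and writing $x = 2u$ and dividing by $4$ yields the simpler equation $u^2 - b(b+1)y^2 = -1$. It remains to show that this auxiliary equation has no positive integer solutions when $b \geq 2$. The cleanest route is to compute the continued fraction of $\sqrt{b^2+b}$ by exactly the kind of elementary manipulation used in Theorem 6(i); one expects to obtain $\sqrt{b^2+b} = [b;\overline{2, 2b}]$, whose period length is $2$, and then invoke the general fact (recalled in the preliminaries) that an even period precludes solvability of the $-1$ form.

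The main obstacle is this auxiliary continued fraction calculation, which is not quoted elsewhere in the paper but is routine. It is worth remarking that the hypothesis $a > 2$ cannot be dropped: for $a = 2$ one has $b = 1$, and then $u^2 - 2y^2 = -1$ admits the solution $(u,y) = (1,1)$, which lifts to the genuine solution $(x,y) = (2,1)$ of $x^2 - 8y^2 = -4$. Thus the case analysis also naturally explains the origin of the restriction.
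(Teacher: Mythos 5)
Your proposal is correct and follows essentially the same route as the paper: the odd case via Theorems 1 and 8, and the even case by descent to $u^{2}-k\left(k^{2}+k\right)^{0}$ -- more precisely to $u^{2}-\left(k^{2}+k\right)y^{2}=-1$ with $a=2k$ -- settled by the even period of $\sqrt{k^{2}+k}=\left[k;\overline{2,2k}\right]$ for $k>1$, exactly as in the paper. Your added observation that $a=2$ genuinely fails (via $x^{2}-8y^{2}=-4$ having the solution $\left(2,1\right)$, matching the paper's $\left[1;\overline{2}\right]$ branch) is a nice touch but not a departure in method.
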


\begin{proof}
If $a$ is odd, then $a^{2}+2a\equiv 3\left( \func{mod}4\right) $. From
Theorem 1, we know that the equation $x^{2}-\left( a^{2}+2a\right) y^{2}=-4$
has positive integer solutions if and only if the equation $x^{2}-\left(
a^{2}+2a\right) y^{2}=-1$ has positive integer solutions. But, from Theorem
8, the equation $x^{2}-\left( a^{2}+2a\right) y^{2}=-1$ has no positive
integer solutions. Therefore, $x^{2}-\left( a^{2}+2a\right) y^{2}=-4$ has no
positive integer solutions.

If $a$ is even, then $a^{2}+2a$ is even. Assume by way of contradiction that
there are positive integers $m$ and $n$ such that $m^{2}-\left(
a^{2}+2a\right) n^{2}=-4$. $a$ and $a^{2}+2a$ are even. Therefore, $m$ is
even. Let $a=2k$. Then $m^{2}-\left( 4k^{2}+4k\right) n^{2}=-4$ and we get $%
\left( m/2\right) ^{2}-\left( k^{2}+k\right) n^{2}=-1$.

The continued fraction expansion of $\sqrt{k^{2}+k}$ is%
\begin{equation*}
\sqrt{k^{2}+k}=\left\{ 
\begin{array}{c}
\left[ 1;\overline{2}\right] \\ 
\left[ k;\overline{2,2k}\right] \text{\ \ \ \ }%
\end{array}%
\right. ,%
\begin{array}{c}
\text{if }k=1 \\ 
\text{if }k>1\text{.}%
\end{array}%
\end{equation*}

We know that $a>2$. Therefore, $k>1$. Thus, the length of the period of
continued fraction $\sqrt{k^{2}+k}$ is $2$, that is even, then the equation $%
\left( m/2\right) ^{2}-\left( k^{2}+k\right) n^{2}=-1$ has no positive
integer solutions. So this is a contradiction. Then the equation $%
x^{2}-\left( a^{2}+2a\right) y^{2}=-4$ has no positive integer solutions.
\end{proof}

\bigskip \textbf{Acknowledgement. }\textit{This research is supported by
TUBITAK (The Scientific and Technological Research Council of Turkey) and
Necmettin Erbakan University Scientific Research Project Coordinatorship
(BAP). This study is a part of the corresponding author's Ph.D. Thesis.}

\end{document}